\documentclass[11pt,reqno]{amsart}

\usepackage[T1]{fontenc}
\usepackage[utf8]{inputenc}
\usepackage[a4paper,margin=29mm]{geometry}
\usepackage{lmodern}
\usepackage{microtype}
\microtypesetup{expansion=false}
\usepackage{amsmath,amssymb,amsthm}
\usepackage{mathtools}
\usepackage{mathrsfs}
\usepackage{enumitem}
\usepackage{tikz-cd}
\usepackage{quiver}
\usepackage{xcolor}
\usepackage{aliascnt}
\usepackage{url}
\usepackage{hyperref}
\usepackage[nameinlink,capitalize,noabbrev]{cleveref}

\hypersetup{
  colorlinks=true,
  linkcolor=blue!55!black,
  citecolor=green!40!black,
  urlcolor=blue!65!black
}

\numberwithin{equation}{section}

\newtheorem{theorem}{Theorem}[section]

\newaliascnt{proposition}{theorem}

\aliascntresetthe{proposition}

\newaliascnt{lemma}{theorem}
\newtheorem{lemma}[lemma]{Lemma}
\aliascntresetthe{lemma}

\newaliascnt{corollary}{theorem}

\aliascntresetthe{corollary}

\newaliascnt{assumption}{theorem}

\aliascntresetthe{assumption}

\theoremstyle{definition}

\newaliascnt{definition}{theorem}
\newtheorem{definition}[definition]{Definition}
\aliascntresetthe{definition}

\newaliascnt{example}{theorem}

\aliascntresetthe{example}

\newaliascnt{problem}{theorem}
\newtheorem{problem}[problem]{Open problem}
\aliascntresetthe{problem}

\theoremstyle{remark}

\newaliascnt{remark}{theorem}

\aliascntresetthe{remark}

\newaliascnt{warning}{theorem}

\aliascntresetthe{warning}

\crefname{proposition}{Proposition}{Propositions}
\Crefname{proposition}{Proposition}{Propositions}
\crefname{lemma}{Lemma}{Lemmas}
\Crefname{lemma}{Lemma}{Lemmas}
\crefname{corollary}{Corollary}{Corollaries}
\Crefname{corollary}{Corollary}{Corollaries}
\crefname{assumption}{Assumption}{Assumptions}
\Crefname{assumption}{Assumption}{Assumptions}
\crefname{definition}{Definition}{Definitions}
\Crefname{definition}{Definition}{Definitions}
\crefname{example}{Example}{Examples}
\Crefname{example}{Example}{Examples}
\crefname{remark}{Remark}{Remarks}
\Crefname{remark}{Remark}{Remarks}
\crefname{warning}{Warning}{Warnings}
\Crefname{warning}{Warning}{Warnings}

\usepackage[table]{xcolor}

\definecolor{lightgray}{gray}{0.9}

\title[Game Chromatic Number with Singletons and Large Parts]{The Game Chromatic number of complete multipartite graphs with singletons and large parts}
\author{Lasitha Jayasinghe}
\date{}

\begin{document}

\maketitle
\begin{abstract}
We study the game chromatic number of complete multipartite graphs whose parts are either singletons or are of size at least four. Building on the work of Pawe\l \   Obszarski et al., we give a formula in terms of the ordered part sizes, and the number of singleton parts.
\end{abstract}
\section{Introduction}
In 1981, a two-player non-cooperative map-coloring game was first conceived by Steven Brams and presented by Martin Gardner \cite{Gardner1981}. The game was later rediscovered by Bodlaender \cite{Bodlaender1991} in the context of graphs, and is called the \textit{graph coloring game}.

The graph coloring game is usually played between two players, Alice and Bob, on a graph $G$ with a set of $k$ fixed colors. The players take turns properly coloring (no two vertices of the same color are neighbours). Alice wins when all the vertices are properly colored using $k$ colors; otherwise, Bob wins.

We say Alice has a \textit{winning strategy} if she can garauntee completely coloring the graph irrespective of Bob's moves, and we say Bob has a winning strategy is he can ensure the graph cannot be completely colored irrespective of Alice's moves.

The smallest value of $k$ for which Alice has a winning strategy is known as the game chromatic number of $G$, denoted $\chi_g(G)$. Notably, since the game yields a proper coloring, $\chi_g(G) \ge \chi(G)$.
\subsection{Previous results}
The game chromatic number has been investigated for several graph classes, including forests \cite{Faigle1993, Toci2015, FURTADO2019461}, toroidal graphs \cite{RASPAUD20091183}, cartesian graphs \cite{BROSSE202546}, planar \cite{KiersteadTrotter1994, Zhu1999} and outerplanar \cite{GuanZhu1999} graphs, and complete multipartite graphs \cite{Dunn2012, Furtado2026, Obszarski2023}. On the other hand, Costa et al. \cite{COSTA202036} established that the decision problem relating to the game is PSPACE-complete for general graphs.

A \textit{complete} $k-$\textit{multipartite} graph is a connected graph whose vertices can be partitioned into $k$ independent sets (referred to as \textit{parts}). We refer to a part with exactly one vertex as a \textit{singleton}.

The graph coloring game was first studied in the context of complete multipartite graphs in passing by Dunn \cite{Dunn2012}, where the game chromatic number of complete multipartite graphs with all parts of equal size was established. Pawe\l \   Obszarski et al. \cite{Obszarski2023} later determined the game chromatic number for all complete multipartite graphs with no singleton. 

There was also a poster \cite{poster} in which there was a formula without any proof for complete $k-$partite graphs $K_{r_1,\dots , r_k}$ with either all $r_i \in \{1,2\}$ or all $r_i \in \{1,3\}$. The poster also stated that $\chi_g(K_{r_1,\dots , r_k})$ is known for all graphs with $r_i \in \{1,2,3\}$. The proofs of the former, and precise formula and proof of the latter were provided by Furtado, Dantas et al. \cite{Furtado2026}, very recently, along with a partial attempt at certain families of large part graphs. They explicitly leave determining an exact expression for singleton parts together with unequal large parts open.

In this paper, we make further progress towards a solution to the problem for all complete $k-$partite graphs. We work with complete multipartite graphs $K_{1^A, r_1,\dots , r_D}$ with $A \ge 1$ singleton parts and $D\ge 1$ non-singleton parts $V_1,\dots , V_D$ of sizes $r_1 \ge r_2 \ge \dots \ge r_D\ge 4$, and completely describe their game chromatic numbers by giving explicit strategies for Alice and Bob.

\subsection{Notation and concepts}

Throughout this paper, we refer to a part with at least four vertices as \textit{large}. Moreover, we will call a part with all vertices colored, no vertices colored or some (but not all) vertices colored a \textit{fully colored}, \textit{uncolored} or \textit{partially colored} part respectively.

We will also say that a player \textit{opened} a part if he or she colored the first vertex of this part. Let us also refer to the move after which every part has at least one colored vertex as the \textit{fixing move}. 

Observe that if Alice has a strategy such that she and Bob use at most $l$ colors by the fixing move, $l$ colors are sufficient to color the entire graph (colors can be repeated within a part without conflict) and $\chi_g(K) \le l$. Similarly, if Bob has a strategy which consumes $l$ colors before the fixing move, $l$ colors are insufficient to color the entire graph and $\chi_g(K)>l$.

Moreover, when we say one of the players \textit{picks} a vertex, we imply that Alice colors the vertex with a color already used in the same partition unless she is opening a partition and Bob always colors the vertex using a new color.

Let $M$ denote the number of distinct colors introduced up to and including the fixing move, and let $N$ denote the number of colored vertices by the fixing move.

Define $P=A+ \sum_{i=1}^D \left \lceil \frac{r_i}{2} \right \rceil$. Further, let $R_j=\sum_{i=1}^j r_i$ for each $1\le j \le D$ and $R_0=0$, and define 
	\[F_j=2A+2D-2j-3 + \left \lceil \frac{(R_j-A+1)_+}{2} \right \rceil\]
	where $x_+ = \max(x,0)$ for a real number $x$.
\subsection{Main results}
We provide a series of strategies for Alice, and reuse a single optimal strategy for Bob from \cite{Obszarski2023}. Using them, we completely describe the game chromatic numbers (see Table 1 below for a summary) of complete multipartite graphs with only singletons and large parts. 
\begin{table}[ht]
\centering
\renewcommand{\arraystretch}{1.5}
\caption{Summary of our results.}
\begin{tabular}{|c|c|c|c|c|}
\hline
$D$ & $A$ & $\chi_g(K_{r_1,\ldots,r_k})$
& Upper bound & Lower bound \\
\hline

1
& 1
& 2
& obvious
& obvious \\
\hline

\rowcolor{gray!15}

1 
& $A>1$ 
& $\min\{2A-1, A+\lceil \frac{r}{2} \rceil\}$
& Theorem 4.2 
& Theorem 4.2 \\
\hline

2 
& $A=r_1=r_2$ odd 
& 2A 
& Theorem 4.3 
& Theorem 4.3 \\
\hline

\rowcolor{gray!15} 

	2 & otherwise & $\min \{2A+1, P\}$ & Section 2.3 & Section 3.5 \\
\hline
	$\ge 3$ & any & $\min\{F_0,F_1,\dots , F_{D-2},P\}$ & Section 2.3 & Section 3.5 \\
	\hline
\end{tabular}
\end{table}
\subsection{Acknowledgements} 
We thank Evan Chen for comments on an earlier version of this manuscript, and the organizers of Math Beyond Limits, including Vuk Jankovi\'c, Jovan Vukovi\'c, Damjan Stankovi\'c, Radek \v{Z}ak,
Daniela Spurtacz, and Boris Stankovi\'c,  for bringing the problem to attention through their 2026 Qualifying Quiz.
\section{Strategies for Alice}
The common idea for Alice is to wait by picking singletons or repeating colors until a certain threshold is reached, and then `rush' by opening a previously uncolored part on every turn. We have two main strategies.
\subsection{Pairing}
\begin{definition}
	Let \textbf{(A4)} denote the following strategy for Alice:
	\begin{enumerate}
		\item in the first move pick a singleton,
		\item while an uncolored singleton remains, if Bob has just picked a vertex in a large part $P$ which is partially colored, color a vertex of $P$ using the same color otherwise color a singleton
	\item once all singletons are colored, open an uncolored large part on every turn until the fixing move
	\end{enumerate}
\end{definition}
Observe that each large part $V_i$ of size $r_i$ before Alice switches to rushing uses at most $\left \lceil \frac{r_i}{2} \right \rceil$ colors since Alice repeats a color after every move that leaves this part partially colored. Moreover, if there are $u$ large parts left when Alice switches to rushing, it takes at most $2u-1$ moves to open all of them and since $r_i \ge 4$, $\left \lceil \frac{r_i}{2} \right \rceil \ge 2$ for each of these parts.

In conclusion, at most
\[M \le A+ \sum_{i=1}^{D-u} \left \lceil \frac{r_i}{2} \right \rceil +2u \le A+\sum_{i=1}^{D} \left \lceil \frac{r_i}{2} \right \rceil \]
colors are used by the fixing move. Hence, $\chi_g(K_{1^A,r_1,\dots , r_D}) \le P$.

\subsection{Limited-reuse}
We first need to make a couple of algebraic observations. Let $q_j =\left \lceil \frac{(R_j-A+1)_+}{2} \right \rceil$. 
\begin{lemma}
	There exists an index $i$ for which $F_i = \min \{F_0,\dots , F_{D-2}\}$ and $q_i \le 1$.
\end{lemma}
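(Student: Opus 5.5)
The plan is to take $i$ to be the \emph{smallest} index in $\{0,\dots,D-2\}$ with $F_i=\min\{F_0,\dots,F_{D-2}\}$, and show that $q_i\le 1$. The argument is by contradiction: if $q_i\ge 2$, then the index $i-1$ is also a minimizer. The only tool needed is the difference formula
\[
F_{j-1}-F_j \;=\; 2-(q_j-q_{j-1}), \qquad 1\le j\le D-2,
\]
which follows directly from the definition of $F_j$, since the term $2A+2D-2j-3$ increases by exactly $2$ when $j$ is replaced by $j-1$.

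First, the case $i=0$. Since $R_0=0$ and $A\ge 1$, we have $(R_0-A+1)_+=(1-A)_+=0$. Hence $q_0=0\le 1$ and there is nothing to prove.

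Now suppose $i\ge 1$ and, for contradiction, $q_i\ge 2$. Write $x=R_i-A+1$. Then $q_i=\lceil x_+/2\rceil\ge 2$ forces $x\ge 3$. Also $R_{i-1}=R_i-r_i$, so $q_{i-1}=\lceil (x-r_i)_+/2\rceil$. I claim $q_i-q_{i-1}\ge 2$, using $r_i\ge 4$, and split into two cases.
\begin{itemize}
\item If $x<r_i$, then $q_{i-1}=0$, so $q_i-q_{i-1}=q_i\ge 2$.
\item If $x\ge r_i$, then $q_i-q_{i-1}=\lceil x/2\rceil-\lceil (x-r_i)/2\rceil\ge \lfloor r_i/2\rfloor\ge 2$. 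This uses the elementary inequality $\lceil a/2\rceil-\lceil (a-b)/2\rceil\ge\lfloor b/2\rfloor$ for integers $a\ge b\ge 0$.
\end{itemize}

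By the difference formula, $F_{i-1}\le F_i$ in either case. So $i-1\in\{0,\dots,D-2\}$ is also a minimizer, contradicting the choice of $i$ as the smallest one. Therefore $q_i\le 1$, as required.

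The only slightly delicate point is the second case, where the ceiling arithmetic must be handled carefully. The inequality there can be checked by the parity of $b$, or deduced from $\lceil a/2\rceil-\lceil c/2\rceil\ge\lfloor (a-c)/2\rfloor$ for integers $a\ge c$. Everything else is bookkeeping with the definitions of $F_j$ and $q_j$.
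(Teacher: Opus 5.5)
Your proof is correct and follows the paper's argument. You take the smallest minimizing index, note $q_0=0$, and show that $q_i\ge 2$ forces $q_i-q_{i-1}\ge 2$ using $r_i\ge 4$, so $F_{i-1}\le F_i$ contradicts minimality; the only cosmetic difference is that the paper gets $q_i\ge q_{i-1}+2$ directly from monotonicity of the ceiling and $R_i\ge R_{i-1}+4$, rather than via your $\lfloor b/2\rfloor$ inequality.
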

\begin{proof}
Choose the smallest index $j$ attaining the minimum. If $j=0$ then $q_j=0$, so consider $j \ge 1$ in what follows. Assume contrariwise that $q_j \ge 2$. Now, 
	\[F_j-F_{j-1} = q_j-q_{j-1}-2\]
	which is non-negative when $q_{j-1}=0$. Otherwise, $q_{j-1} >0$, so
	\[q_j =\left \lceil \frac{R_j-A+1}{2} \right \rceil \ge \left \lceil \frac{R_{j-1}-A+1}{2} \right \rceil +2 = q_{j-1}+2\]
	which once again makes $F_j \ge F_{j-1}$, contradicting our assumption. Therefore, $q_j \le 1$.
\end{proof}
\begin{definition}
	Fix a minimizing index $j$ with $q_j \in \{0,1\}$. Let \textbf{(A5)} denote the following strategy for Alice:
	\begin{enumerate}
		\item in the first move pick a singleton,
		\item if Bob has opened $j+1$ parts (possibly including singletons), switch to rushing
		\item otherwise, if $q_j=1$ and the reuse is unused, and a partially colored large part $P$ exists, color a vertex of $P$ reusing a color
		\item otherwise color a singleton
	\end{enumerate}
\end{definition}
We first need to confirm that this strategy can actually be implemented. Assume contrariwise that at the beginning of Alice's turn, all singletons are colored, her $q:=q_j$ reuse moves are consumed, but Bob has opened at most $j$ parts. 

Let $s$ be the number of singletons colored by Bob, and $l$ the number of large parts opened by Bob. Then, $s+l \le j$, Alice has colored $A-s$ singletons and $q$ reuse moves. Since it is her turn, each player must have performed exactly $t=A-s+q$ moves at this point.

Of Bob's $t$ moves we know that $s$ are singleton moves, so the total number of vertices colored in large parts is exactly 
\[(t-s)+q = A-2s+2q.\]
However, since Alice has not opened any large parts, these must all lie in the $l$ large parts opened by Bob which contain a total of at most $R_l$ vertices. Moreover, since every large part has size at least four, 
\[R_l \le R_j - 4(j-l) \le R_j - 4s\]
From the definition of $q$, 
\[R_j \le A+2q-1\] 
so combining these three inequalities we conclude
\[A-2s+2q \le R_l \le A+2q-1-4s\]
implying $2s+1 \le 0$, a clear contradiction.

The only remaining possibility therefore, is for Alice to have failed to use her $q$ reuse moves (which requires $q=1$). From the definition of $q$ it follows that  $R_j \in \{A,A+1\}$. In particular, $j \ge 1$ so
\[A \ge R_j -1 \ge 4j-1 > 2j\]
Since Alice uses her reuse move as soon as possible, it must be the case that both players have only performed singleton moves until now. Since $A>2j$, if Bob has colored at most $j$ singletons, a singleton is still available, contradicting our assumption that Alice is stuck.

Now, consider the moment when Bob opens $j+1$ parts. Since $j \le D-2$, the fixing move has not already occurred, and Alice can rush through the remaining parts. If Alice performed $x \le q_j$ reuse moves, and opened $s$ singletons, she made a total of $x+s$ moves. We also have $A+D-(j+1)-s$ empty parts, so at most $2A+2D-2j-2s-3$ more colors are needed to color these. Bob received $x+s$ moves until this point, so a total of at most
\[M \le s + (x+s) + (2A+2D-2j-2s-3) = 2A+2D-2j-3 +x \le F_j\]
colors are used by the fixing move.
\subsection{Upper bound}
Combining the observations of the last two sections, we can conclude that by picking the optimal strategy for Alice, 
\[\chi_g(K_{1^A,r_1,\dots , r_D}) \le \min\{F_0,F_1,\dots , F_{D-2},P\}\]
for all $D \ge 3$.
\section{Strategies for Bob}

In this section we deal with $D \ge 2$, with the subsection 2.4 focusing on $D \ge 3$. Recall from \cite{Obszarski2023} the following simple strategy for Bob.
\begin{definition}
	Let \textbf{(B1)} denote the following strategy for Bob:
	\begin{enumerate}
		\item if the part $P$ Alice just colored a vertex from is partially colored, pick a vertex of $P$
		\item otherwise if the part $P$ Alice just colored a vertex from is fully colored, but there is some other partially colored part $P$, pick a vertex of $P$
		\item otherwise if there are no partially colored parts, pick a vertex from the largest uncolored part
	\end{enumerate}
\end{definition}

Now, say at the fixing move we have $a$ completely colored large parts opened by Alice and $b$ completely colored large parts opened by Bob. Then, $m=D-a-b$ large parts are partially colored.

\subsection{$m \ge 1$ and $b \le D-2$.}
Of the $m$ parts, at most one partially colored part was opened by Bob. Moreover, Bob did not pick any singleton. Hence, Alice opened at least $a+(m-1)=D-b-1$ large parts, forcing her to introduce a new color at least $A+D-b-1$ times. Bob gets at least $A+D-b-2$ moves in the meantime forcing at least
\begin{equation}
	M \ge 2A+2D-2b-3
\end{equation}
colors to be used by the fixing move.

We can further note that since \textbf{(B1)} prefers larger parts, at the fixing move, the completed large parts include the $b$ largest parts, up to ties. Thus, the completely colored large parts contain a total of at least $R_b$ vertices. Since a total of $a+b$ large parts were completely colored, there are at least $a$ other large parts, each with at least 4 vertices for a total of at least $4a$ more vertices in completely colored large parts.

As before, at most one of the $m$ parts was opened by Bob, and accounting for possibly the partition on which the fixing move was performed at least $m-2$ of the partially colored parts were opened by Alice, with Bob immediately coloring another vertex of that part. Hence, these parts contain a total of at least $2m-2$ colored vertices. 

In total, at least 
\[N \ge A+R_b + 4a + 2m-2 = A+2D+R_b+2a-2b-2\]
vertices were colored by the fixing move. Since Bob colors $\left \lfloor \frac{N}{2} \right \rfloor$ of these vertices, and uses a different color every time, we conclude that a total of at least
\begin{equation}
	M \ge A+D-b-1 + \left \lfloor \frac{N}{2} \right \rfloor \ge A+2D+a-2b-2 + \left \lfloor \frac{A+R_b}{2} \right \rfloor
\end{equation}
colors must be used by the fixing move.  

Observe that the bound $(3.2)$ exceeds $(3.1)$ by $-A+a+1+ \left \lfloor \frac{A+R_b}{2} \right \rfloor$. When this quantity is non-positive, the bound $(3.1)$ is preferred, and when it is positive $(3.2)$ is. In summary, 
\[M \ge 2A+2D-2b-3 + \left(-A+1+ \left \lfloor \frac{A+R_b}{2} \right \rfloor \right)_+ = 2A+2D-2b-3 + \left \lceil \frac{(R_b-A+1)_+}{2} \right \rceil\]

\subsection{$m \ge 1$ and $b=D-1$.} 
Note that we must have $m=1$ and $a=0$ by definition. The $D-1$ completed large parts have at least $R_{D-1}$ vertices, and the partially colored part has at least one colored vertex for a total of at least
\[N \ge A+R_{D-1}+1\]
colored vertices. Bob has $\left \lfloor \frac{N}{2} \right \rfloor$ moves, and Alice supplies the singleton colors implying that a total of at least
\[M \ge A+\left \lfloor \frac{A+R_{D-1}+1}{2} \right \rfloor\]
colors are used by the fixing move.
\subsection{$m=0$} 
The key is to note that for each large part $V_i$ of size $r_i$, the number of colors used in $V_i$ is at least $\left \lceil \frac{r_i}{2} \right \rceil$.

To see why, we split into cases based on who opened $V_i$. If Bob opened, for every Alice move Bob gets another move in the same partition except possibly for the last so the number of vertices Bob picks is at least the number of vertices Alice picks, implying that at least $\left \lceil \frac{r_i}{2} \right \rceil$ colors are used.

However, if Alice opened, Bob replies immediately with a different color, and for every Alice move, Bob gets another move in the same partition except possibly for the last. Hence, Bob can pick at least $\left \lfloor \frac{r_i}{2} \right \rfloor$ vertices for a total of $\left \lfloor \frac{r_i}{2} \right \rfloor+1$ colors.

Including the singletons, this implies that a total of at least
\[M \ge A+ \sum_{i=1}^D \left \lceil \frac{r_i}{2} \right \rceil\]
colors are used by the fixing move.
\subsection{Endpoint redundancy for $D \ge 3$}
We finally make a small algebraic observation which allows us to essentially ignore the second case.
\begin{lemma}
	For $D \ge 3$, 
	\[A+\left \lfloor \frac{A+R_{D-1}+1}{2} \right \rfloor \ge \min(F_{D-2},P) \]
\end{lemma}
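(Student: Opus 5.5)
The plan is a direct case comparison. I first rewrite $F_{D-2}$ in a usable form and then split on the size of $R_{D-1}$ relative to $A$. Plugging $j=D-2$ into the definition gives
\[F_{D-2}=2A+1+q_{D-2},\qquad q_{D-2}=\left\lceil \frac{(R_{D-2}-A+1)_+}{2}\right\rceil .\]
Throughout, write $L=A+\left\lfloor \frac{A+R_{D-1}+1}{2}\right\rfloor$ for the left-hand side. The one structural fact I will use repeatedly is $R_{D-1}=R_{D-2}+r_{D-1}\ge R_{D-2}+4$.

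\emph{Case 1: $R_{D-2}\ge A-1$.} Here the positive part is inactive, and I will show $L\ge F_{D-2}$. Absorbing $2A+1=A+\frac{2A+2}{2}$ into the ceiling gives
\[F_{D-2}=A+\left\lceil \frac{A+R_{D-2}+3}{2}\right\rceil .\]
On the other side, $L\ge A+\left\lfloor \frac{A+R_{D-2}+5}{2}\right\rfloor$. Setting $y=A+R_{D-2}+3$, it remains to check $\lfloor (y+2)/2\rfloor=\lfloor y/2\rfloor+1\ge\lceil y/2\rceil$, which always holds.

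\emph{Case 2: $R_{D-2}<A-1$.} Then $q_{D-2}=0$ and $F_{D-2}=2A+1$.
\begin{itemize}
\item If $R_{D-1}\ge A+1$, then $\left\lfloor\frac{A+R_{D-1}+1}{2}\right\rfloor\ge A+1$, so $L\ge 2A+1=F_{D-2}$.
\item If $R_{D-1}\le A$, I will instead compare with $P$. Since $A\ge R_{D-1}$, we get $\left\lfloor\frac{A+R_{D-1}+1}{2}\right\rfloor\ge R_{D-1}$. It then suffices to prove
\[\sum_{i=1}^{D}\left\lceil \frac{r_i}{2}\right\rceil\le R_{D-1}.\]
\end{itemize}

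The last inequality is the only genuinely delicate step, because the term $\lceil r_D/2\rceil$ is not accounted for in $R_{D-1}$. I handle it by pairing and absorption:
\begin{itemize}
\item For the last two parts, $r_D\le r_{D-1}$ gives $\lceil r_{D-1}/2\rceil+\lceil r_D/2\rceil\le 2\lceil r_{D-1}/2\rceil\le r_{D-1}+1$.
\item For every $i\le D-2$, $r_i\ge 4$ gives $\lceil r_i/2\rceil\le r_i-2$. This holds at $r=4,5$ and the gap only grows for larger $r$.
\item Because $D\ge 3$, there is at least one index $i\le D-2$. Its saving of $2$ absorbs the surplus $+1$ from the last pair, so the total is at most $R_{D-1}+1-2\le R_{D-1}$.
\end{itemize}
This is exactly where the hypothesis $D\ge 3$ is used. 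Thus $L\ge P$ in this subcase, and in every case $L\ge\min(F_{D-2},P)$.
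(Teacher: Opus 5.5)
Your proof is correct and follows essentially the paper's route. You use the same three regimes, except that you move the boundary case $R_{D-2}=A-1$ into the floor/ceiling comparison; this is harmless since the positive part is inactive there. You compare with $F_{D-2}$ when $R_{D-1}\ge A+1$ and with $P$ when $R_{D-1}\le A$, and you use $D\ge 3$ in the same way to absorb the extra term $\lceil r_D/2\rceil$. The only difference is cosmetic bookkeeping in that last step: you use $2\lceil r_{D-1}/2\rceil\le r_{D-1}+1$ and $\lceil r_i/2\rceil\le r_i-2$, whereas the paper bounds $\lceil r_D/2\rceil\le\lfloor r_{D-1}/2\rfloor+\lfloor r_{D-2}/2\rfloor$ and then pairs ceilings with floors.
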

\begin{proof}
Let $H =A+\left \lfloor \frac{A+R_{D-1}+1}{2} \right \rfloor $. The proof splits into two cases.

	\textit{Case 1 :} $A<R_{D-1}$. If $R_{D-2}<A$, then $F_{D-2}=2A+1$ by definition. Moreover, $R_{D-1} > A$ so $H \ge 2A+1 = F_{D-2}$. On the other hand, if $R_{D-2} \ge A$, then $F_{D-2}=2A+1+ \left \lceil \frac{R_{D-2}-A+1}{2} \right \rceil$ while
	\[H = 2A +\left \lfloor \frac{(R_{D-2}-A)+r_{D-1}+1}{2} \right \rfloor\ge F_{D-2}.\]
	since $r_{D-1} \ge 4$.

	\textit{Case 2 :} $A\ge R_{D-1}$. In this case, $H \ge A+ R_{D-1}$ and for $D \ge 3$, 
	\[\sum_{i=1}^D  \left \lceil \frac{r_i}{2} \right \rceil \le \sum_{i=1}^{D-1}  \left \lceil \frac{r_i}{2} \right \rceil +  \left \lfloor \frac{r_{D-1}}{2} \right \rfloor +  \left \lfloor \frac{r_{D-2}}{2} \right \rfloor  \le \sum_{i=1}^{D-1}  \left \lceil \frac{r_i}{2} \right \rceil + \sum_{i=1}^{D-1}  \left \lfloor \frac{r_i}{2} \right \rfloor = R_{D-1}\]
	Thus, 
	\[H \ge A+R_{D-1} \ge P.\]
\end{proof}
\subsection{Lower bound} Combining the observations of the last four sections, Bob can always ensure that at least the minimum among the described lower bounds is forced. Thus, 
\[\chi_g(K_{1^A,r_1,\dots , r_D}) \ge \min\{F_0,F_1,\dots , F_{D-2},P\}\]
for all $D \ge 3$.
\section{Results}
We can now put all our work together.
\subsection{$D \ge 3$}
\begin{theorem}
	For all $D \ge 3$, 
	\[\chi_g(K_{1^A,r_1,\dots , r_D}) = \min \{F_0,F_1,\dots , F_{D-2},P\}.\]
\end{theorem}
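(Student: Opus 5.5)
The theorem follows by combining the upper bound of Section 2.3 with the lower bound of Section 3.5. I would still spell out the case analysis on the fixing-move configuration, so that every game outcome under \textbf{(B1)} is accounted for.

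For the upper bound, Alice chooses between two strategies. Strategy \textbf{(A4)} guarantees at most $P$ colors by the fixing move. For any minimizing index $j$ with $q_j\le 1$, whose existence is guaranteed by the first lemma of Section 2.2, strategy \textbf{(A5)} guarantees at most $F_j=\min\{F_0,\dots,F_{D-2}\}$ colors. The condition $j\le D-2$ is needed so that the fixing move has not occurred when Bob opens his $(j+1)$-st part. By the observation in the notation section, using at most $l$ colors by the fixing move suffices to win with $l$ colors. Hence $\chi_g\le\min\{F_0,\dots,F_{D-2},P\}$.

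For the lower bound, Bob plays \textbf{(B1)}. I would take $l=\min\{F_0,\dots,F_{D-2},P\}-1$ and show that Bob forces at least $l+1$ colors by the fixing move, whatever Alice does. At the fixing move, let $a,b,m$ be as in Section 3 and split into three cases.
\begin{itemize}
\item If $m\ge1$ and $b\le D-2$, Section 3.1 gives $M\ge F_b\ge\min_{i\le D-2}F_i$.
\item If $m\ge1$ and $b=D-1$, Section 3.2 gives $M\ge A+\lfloor (A+R_{D-1}+1)/2\rfloor$. By the endpoint lemma of Section 3.4 this is at least $\min(F_{D-2},P)$; this is where $D\ge3$ is used.
\item If $m=0$, Section 3.3 gives $M\ge P$.
\end{itemize}
In every case $M$ is at least the claimed minimum, so Bob wins with fewer colors.

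The step I would check most carefully is the bookkeeping in the case $m\ge1$, $b\le D-2$. One must confirm that the completely colored parts opened by Bob really include the $b$ largest parts, so that they contain at least $R_b$ vertices. This relies on \textbf{(B1)} always opening the largest uncolored part and only doing so when no part is partially colored, and it must also handle ties. One must also confirm that the two bounds (3.1) and (3.2) combine into the $\lceil (R_b-A+1)_+/2\rceil$ term. If the tie-breaking causes trouble, I would argue via a relabelling of equal-size parts, which leaves every $R_b$ unchanged.
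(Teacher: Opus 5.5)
Your proposal is correct and is essentially the paper's own proof. The upper bound comes from \textbf{(A4)} and \textbf{(A5)} as combined in Section 2.3, and the lower bound comes from \textbf{(B1)} via the exhaustive split on $(m,b)$, with the endpoint lemma absorbing the case $b=D-1$, exactly as in Section 3.5.

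One correction to your final paragraph: the completed parts opened by Bob need not contain the $b$ largest parts, and relabelling ties does not repair this. For example, Alice can open $V_1$ first and finish it with Bob's forced replies, after which Bob opens $V_2$. What Section 3.1 actually uses is weaker: all $a+b$ completed large parts together contain the $b$ largest parts up to ties. This holds because Bob only opens a part when nothing is partially colored, so every strictly larger part is already fully colored at that moment. That is what yields the count $R_b+4a$.
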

\begin{proof}
	Follows from sections 2.3 and 3.5.
\end{proof}
We are finally left to deal with the degenerate cases $D=1$ and $D=2$.
\subsection{$D=1$ and $D=2$}
\begin{theorem}
	For $D=1$, 
	\[\chi_g(K_{1^A,r}) = \begin{cases}
		\min \left\{2A-1, A+ \left \lceil \frac{r}{2} \right \rceil \right \} & A>1 \\
		2 & \text{otherwise}
	\end{cases}
	\]
\end{theorem}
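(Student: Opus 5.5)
We treat the two cases of the statement separately. In each case we use strategies from Sections 2 and 3, or short variants of them, and we count colors up to the fixing move. Throughout, $D=1$, so the only large part is $V_1$, of size $r\ge 4$.

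\emph{The case $A=1$.} The graph $K_{1,r}$ is a star containing an edge, so $\chi_g\ge\chi=2$. For the upper bound, Alice colors the singleton on her first move. Every later vertex lies in $V_1$, which is independent and adjacent only to the already colored singleton. Hence one second color suffices for all of $V_1$, and two colors always let Alice finish.

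\emph{Upper bound for $A>1$.} The bound $A+\lceil r/2\rceil=P$ follows from strategy \textbf{(A4)}. The argument of Section 2.1 does not use $D\ge 2$, so it applies verbatim. For the bound $2A-1$, we use the following waiting strategy for Alice:
\begin{enumerate}
\item color an uncolored singleton whenever one remains;
\item once all singletons are colored, open $V_1$ if it is still uncolored.
\end{enumerate}
If Bob never opens $V_1$, then Alice opens it on the fixing move, and at most $A+1\le 2A-1$ colors are used. Otherwise, the colors used by the fixing move are the $A$ singleton colors together with at most one new color per Bob move in $V_1$.

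Let $s$ be the number of singletons colored by Bob. Alice colors the other $A-s$ singletons in her first $A-s$ moves. The fixing move occurs no later than her $(A-s)$-th move, so Bob has at most $A-s-1$ moves before it. At most $A-s-1$ of these lie in $V_1$, hence
\[M\le A+(A-s-1)\le 2A-1.\]
Combining the two strategies gives $\chi_g\le \min\{2A-1,\,A+\lceil r/2\rceil\}$.

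\emph{Lower bound for $A>1$.} Bob plays \textbf{(B1)}, and we split according to the state of $V_1$ at the fixing move.
\begin{itemize}
\item If $V_1$ is fully colored, the argument of Section 3.3 (the case $m=0$) shows that $V_1$ alone uses at least $\lceil r/2\rceil$ colors. Adding the $A$ singleton colors gives $M\ge A+\lceil r/2\rceil$.
\item If $V_1$ is partially colored at the fixing move, we track where Bob plays. Before $V_1$ is opened, every part Alice colors is a singleton and hence fully colored, and no part is partially colored. So rule (3) of \textbf{(B1)} makes Bob open $V_1$, the largest uncolored part, on his first move, unless Alice opened it earlier. After that, rules (1) and (2) keep Bob inside $V_1$ as long as it is partially colored.
\item Consequently, in this second case Bob never colors a singleton, so Alice colors all $A$ singletons herself. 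Bob gets at least $A-1$ moves interleaved with her $A$ singleton moves. Each of these moves introduces a new color in $V_1$, distinct from all singleton colors. Therefore $M\ge A+(A-1)=2A-1$.
\end{itemize}
In either case $M\ge\min\{2A-1,\,A+\lceil r/2\rceil\}$, which matches the upper bound.

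The main point to verify carefully is the claim that \textbf{(B1)} never directs Bob to a singleton while $V_1$ is partially colored. This holds because, with $D=1$, any partially colored part must be $V_1$. A second detail is that Bob always has a legal move in $V_1$ in the second case: this holds because $V_1$ stays partially colored until the fixing move.
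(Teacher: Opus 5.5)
Your proof is correct and follows the paper's own argument. You use \textbf{(A4)} together with rushing through the singletons for the upper bound, and \textbf{(B1)} with a case split on whether $V_1$ is fully or partially colored at the fixing move for the lower bound. Your check that \textbf{(B1)} never sends Bob to a singleton while $V_1$ is partially colored makes explicit a step the paper only asserts.

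Your value $2$ for $A=1$ is the right one; the paper's proof misstates $\chi_g(K_{1,r})=1$. There is one small slip in the $2A-1$ upper bound: when Bob makes the fixing move, it does not occur ``no later than her $(A-s)$-th move''. Your count of Bob's earlier moves still holds, and if Bob's fixing move opens $V_1$ then $M=A+1\le 2A-1$ directly.
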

\begin{proof}
Clearly, $\chi_g(K_{1,r})=1$ so we deal with $A>1$ in what follows.

	If Alice immediately rushes starting with the singletons, it takes her at most $A-1$ moves to open every part, since Bob is forced to open a part on his first move. Thus, $2A-1$ colors are sufficient. If Alice uses \textbf{(A4)}, as before, $ A+ \left \lceil \frac{r}{2} \right \rceil$ colors suffice.

If the large part is partially colored, Bob never played in a singleton, Alice colored all $A$ of them. Therefore, Alice introduced at least $A$ colors, while Bob does at least $A-1$ moves for a total of at least $2A-1$ colors. Moreover, if the large part is complete, as before $ A+ \left \lceil \frac{r}{2} \right \rceil$ colors are required.
\end{proof}
\begin{theorem}
	For $D=2$, 
	\[\chi_g(K_{1^A,r_1,r_2}) = \begin{cases}
		2A & A=r_1=r_2 \text{ odd} \\
		\min\{2A+1,P\} & \text{ otherwise}
	\end{cases}\]
\end{theorem}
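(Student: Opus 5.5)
For $D=2$ the general machinery specializes directly. Since $A\ge 1$, we have $q_0=\left\lceil \frac{(1-A)_+}{2}\right\rceil=0$, and hence $F_0=2A+1$. So the claimed value in the generic case is exactly $\min\{F_0,P\}$, the same shape as the $D\ge 3$ formula with the only admissible index $j=0=D-2$. I would therefore reuse Sections 2 and 3 wholesale. The only new work is the analogue of the endpoint lemma in Section 3.4, whose Case 2 used $D\ge 3$ and genuinely fails for $D=2$; this failure is what produces the exceptional case.

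\emph{Upper bound, generic case.} Strategy \textbf{(A4)} gives $\chi_g\le P$ verbatim. Strategy \textbf{(A5)} with $j=0$ is legal because $j\le D-2$, and gives $\chi_g\le F_0=2A+1$. Concretely, Alice colors singletons until Bob opens any part and then rushes.

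\emph{Lower bound.} Bob plays \textbf{(B1)}. The three cases of Section 3 give respectively:
\begin{itemize}
\item $m\ge1$, $b=0$: $M\ge F_0=2A+1$;
\item $m=0$: $M\ge P$;
\item $m=1$, $b=1$: $M\ge H:=A+\left\lfloor \frac{A+r_1+1}{2}\right\rfloor$.
\end{itemize}
It remains to compare $H$ with $\min\{2A+1,P\}$.
\begin{itemize}
\item If $r_1\ge A+1$, then $\left\lfloor \frac{A+r_1+1}{2}\right\rfloor\ge A+1$, so $H\ge 2A+1$.
\item If $r_1\le A$, then $H\ge A+r_1$. Also $P=A+\lceil r_1/2\rceil+\lceil r_2/2\rceil\le A+r_1$ unless $r_1=r_2$ is odd, in which case $P=A+r_1+1$.
\item In that odd, equal situation, $A\ge r_1+1$ still gives $H\ge A+r_1+1=P$.
\item The sole leftover is $A=r_1=r_2$ odd. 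There $H=2A$, which gives the lower bound $2A$. Note that $P=2A+1=F_0$ in this case.
\end{itemize}

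\emph{Upper bound, exceptional case $A=r_1=r_2=r$ odd.} This is the main obstacle: Alice needs a strategy using only $2A$ colors, which is $F_1$ formally ($q_1=\left\lceil\frac{1}{2}\right\rceil=1$, so $F_1=2A-1+1=2A$). The plan is to run \textbf{(A5)} with $j=1$ and a single reuse:
\begin{itemize}
\item Alice first picks a singleton.
\item The first time a partially colored large part exists, she spends her one reuse there.
\item Otherwise she colors singletons.
\item She rushes once Bob has opened two parts.
\end{itemize}
The feasibility argument of Section 2.2 goes through unchanged, since it never used $j\le D-2$. The count $M\le 2A+2D-2j-3+x\le F_1$ also goes through, with $x\le 1$ the number of reuses actually made. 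What breaks is the claim that the fixing move has not yet occurred when Bob opens his second part, and this needs separate treatment. Suppose Bob's opening of the second large part is itself the fixing move. Then all $A$ singletons are already colored. I would count Bob's moves as $t$ and Alice's as $t$ or $t-1$, and use that the first large part holds at most $r=A$ vertices, one of which carries Alice's reused color. This should bound the total number of colors by $A+(\text{Bob's moves})\le 2A$. Checking this parity bookkeeping carefully, including the variant where Bob opens a large part before Alice's reuse becomes available, is where I expect the real work to lie. If \textbf{(A5)} with $j=1$ does not close, I would instead try a direct pairing strategy: Alice answers every Bob move in a large part with a repeated color in that part, but only until one reuse has been made.
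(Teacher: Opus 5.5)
Your lower bound (Bob's \textbf{(B1)}, the three cases, and the comparison of $H$ with $\min\{2A+1,P\}$) and your generic upper bound via \textbf{(A4)} and \textbf{(A5)} at $j=0$ are the paper's argument. You depart from the paper only in the exceptional case $A=r_1=r_2$ odd. There the paper uses a bespoke strategy \textbf{(A6)}: color singletons, except that when one large part has exactly $A-1$ colored vertices and the other is uncolored, complete it with an old color; rush once singletons are gone. You instead run \textbf{(A5)} at $j=1=D-1$ with one reuse. The step you left as ``should'' is the crux, and it is not a formality. For $D=2$, \textbf{(A5)} at $j=1$ fails in general: for $A=6$, $r_1=r_2=5$ we have $F_1=11$, yet Bob fills $V_1$ and then opens $V_2$ as the fixing move, forcing $12$ colors.

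It does close in your case. Suppose Bob's second opening were the fixing move.
\begin{enumerate}
\item Before rushing, Alice opens only singletons, so both of Bob's openings must be large parts. Call the first one $V$.
\item At Alice's next turn, Bob had opened one part and $V$ was partially colored (as $A\ge 5$). So Alice made her reuse in $V$.
\item Bob colored no singleton, so Alice made exactly $A+1$ moves. Since she moves first, Bob has also made $A+1$ moves.
\item Each of Bob's $A-1$ non-opening moves lies in $V$, the only partially colored part.
\item Counting the opening of $V$ and Alice's reuse, $V$ would contain $(A-1)+1+1=A+1>A$ colored vertices, which is impossible.
\end{enumerate}
This is your bound $A+t\le 2A$ in another form, since $V$ holds $t$ colored vertices. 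Also, there is no ``$t$ or $t-1$'' ambiguity: right after a Bob move, both players have moved equally often. The variant you worried about, where Bob's first opening is a singleton, is harmless, because then his second opening leaves a large part uncolored. So at Bob's second opening some part is still empty, and your count $2A-1+x\le 2A$ applies.

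Both routes exploit the same parity effect. One free move of Alice inside a part of size exactly $A$ stops Bob from filling that part and then opening the last large part as the fixing move. The paper spends the free move at the end, completing the part at $A-1$ vertices, which gives a self-contained two-case count. You spend it at the start. That lets you reuse the feasibility and counting arguments of Section 2.2, and it shows the exceptional value $2A$ is the formal $F_{D-1}$: the exception is limited reuse pushed one index beyond the range $j\le D-2$.
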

\begin{proof}
	The proof that $\chi_g(K_{1^A,r_1,r_2}) \ge \min \{2A+1,H,P\}$ follows from our work in section 3. Also, the proof that $2A+1$ or $P$ colors are sufficient follows from our work in section 2.

	The missing component is to return to section 3.4 and determine when $H$ is not redundant for $D=2$. Note that if $H<2A+1$, we have $r_1 \le A$. Therefore, if $H<P$,
	\[\left \lceil \frac{A+r_1}{2} \right \rceil \ge r_1 \ge  \left \lceil \frac{r_1}{2} \right \rceil + \left \lceil \frac{r_2}{2} \right \rceil -1\]
	which since $A \ge r_1 \ge r_2$, forces equality and hence $A=r_1=r_2$, and the common value must be odd. Finally, in this case $H=2A$.

	To show that it is possible for Alice to reach the fixing move using at most $2A$ colors for $K_{1^A,A,A}$, consider an adjusted strategy for Alice.
	\begin{definition}
		Let \textbf{(A6)} denote the following strategy for Alice:
		\begin{enumerate}
			\item if one of the large parts has exactly $A-1$ colored vertices and the other is uncolored, color the last vertex of that part using an already present color
			\item otherwise, if an uncolored singleton exists, pick one
			\item otherwise, rush
		\end{enumerate}
	\end{definition}
To finish from here, we have two cases.

	\textit{Case 1 :} If the completion move (1) occurs, before that move Alice has only colored singletons. If $t$ moves have been performed by Bob and of these $s$ involve singletons, the large part contains $A-1=t-s$ colored vertices, while $t+s=A-1 +2s$ singleton vertices have been colored. Since there are only $A$ singletons, we are forced to have $s=0$. Thus, Bob never colors a singleton. 

	Moreover, exactly one singleton and the other large part remain untouched. The completion costs no new color, and the next two openings give a total of at most $2A$.

	\textit{Case 2 :} If the completion move never occurs, every Alice move opens a part, and Bob must have opened at least two parts. Alice therefore opens at most $A$ parts, Bob receives at most $A$ moves, for a total of at most $2A$ colors.  
\end{proof}
\section{General remarks and Open problems}
We have been able to confirm that the strategy \textbf{(B1')} --- which in this case degenerates to \textbf{(B1)} --- from \cite{Obszarski2023} is optimal for the considered class of graphs, while the more general conjecture of its optimality for all multipartite graphs remains open.

When we allow for the appearance of parts of size two and three, the problem becomes more complicated. The approach we outlined above does not extend naturally to these cases because the monotonicity arguments for $F_j$ only work under the assumption that $r_j \ge 4$ for all non-singleton parts. 

However, the present result suggests that the fixing move is the natural state at which to analyze the coloring game on complete multipartite graphs; extending this viewpoint to graphs containing parts of size two and three appears to be the next step toward a complete classification.

In particular, including parts of size three seems a promising next direction.
\begin{problem}
	Find the exact formula for $\chi_g$ for all complete multipartite graphs $K_{1^A,3^C,r_1,\dots , r_D}$ where $r_1 \ge r_2 \ge \dots \ge r_D \ge 4$.
\end{problem}
\section*{Declaration of generative AI and AI-assisted technologies}
No AI models, or automated proof assistants driven by generative AI were utilized in generating the mathematical results, proofs, or text of this paper. 
\bibliographystyle{plain}
\bibliography{refs}
\end{document}